\numberwithin{equation}{section}
\newtheorem{theorem}{Theorem}[section]
\newtheorem{corollary}[theorem]{Corollary}
\newtheorem{lemma}[theorem]{Lemma}
\theoremstyle{definition}
\theoremstyle{remark}
\numberwithin{equation}{section}
\DeclareMathOperator{\RE}{Re}
\begin{document}
	
	\title[\tiny{Sharp bounds of third Hankel determinant}]{Sharp bounds of third Hankel determinant for a class of starlike functions and a subclass of $q$-starlike functions}

		\author[S. Banga]{Shagun Banga}
	\address{Department of Applied Mathematics, Delhi Technological University, Delhi--110042, India}
	\email{shagun05banga@gmail.com}

	\author[S.S. Kumar]{S. Sivaprasad Kumar$^{*}$}
	\address{Department of Applied Mathematics, Delhi Technological University, Delhi--110042, India}
	\email{spkumar@dce.ac.in}

	\thanks{$^{*}$ corresponding author}
	\subjclass[2010]{30C45, 30C50}
	
	\keywords{Sharp bounds, Third Hankel determinant, $q$-starlike functions, Carath\'{e}odory coefficients, lemniscate of Bernoulli.}
	\maketitle
	\begin{abstract}
		Following the trend of coefficient bound problems in Geometric Function Theory, in the present paper, we obtain the sharp bound of $|H_3(1)|$ for the class $\mathcal{S}^*$, of starlike functions and $\mathcal{SL}_q^*$, of $q$- starlike functions related with lemniscate of Bernoulli. Bound on the initial class is also an improvement over the existing known bound and the bound on the latter class generalizes the prior known outcome. Further, we determine the extremal functions to prove the sharpness of our results. 
	\end{abstract}
	\maketitle
	
	\section{Introduction}
	\label{intro}
	Denote the class of analytic functions $f(z)=z+\sum_{n=2}^{\infty}a_n z^n$, defined on the open unit disk $\mathbb{D}$ by $\mathcal{A}$. Let $\mathcal{S}$ be the subclass of $\mathcal{A}$ consisting of the univalent functions. For two analytic functions $f$ and $g$, we say $f$ is subordinate to $g$ if there exists a Schwarz function $\omega(z)$ with $\omega(0)=0$ and $|\omega(z)| <1$ such that $f(z) = g(\omega(z))$. 
The normalized function $f$ in $\mathcal{S}$ satisfying the following inequality
$$ \RE \dfrac{z f'(z)}{f(z)} >0, \quad z \in \mathbb{D},$$ belongs to the class of starlike functions, denoted by $\mathcal{S}^*$.  Furthermore, various sub-classes of $\mathcal{S}^*$ have been introduced and studied by many authors in the past (see~\cite{jan,men,robert,ron}). Likewise, Sok\'{o}\l~and Stankiewicz~\cite{Sok} introduced the class $ \mathcal{SL^*}$, defined as
\begin{align*}\label{sl} \mathcal{SL^*} :=\left\{f \in \mathcal{A}: \dfrac{z f'(z)}{f(z)} \prec \sqrt{1+z}, \quad z \in \mathbb{D}\right\}.\end{align*} Since then, enormous work is done for the class $\mathcal{SL^*}$, for ready reference (see~\cite{ali2012,aliravi2012,kumar2013,shelly2015,sokol22009}). 

Let $\tilde{q}$, $n \in \mathbb{N}$. For a function $f \in \mathcal{A}$, the $\tilde{q}^{th}$ Hankel determinant, is defined as follows:
\begin{equation*}\label{hqn}
	H_{\tilde{q}}(n) :=
	\begin{vmatrix}
		a_n & a_{n+1} & \ldots & a_{n+\tilde{q}-1}\\
		a_{n+1} & a_{n+2} & \ldots & a_{n+\tilde{q}}\\
		\vdots & \vdots & \ddots & \vdots \\
		a_{n+\tilde{q}-1}& a_{n+q} &\ldots & a_{n+2\tilde{q}-2}
	\end{vmatrix},
\end{equation*}
introduced in \cite{pommerenke1966} and  has been studied by several authors. It also plays an important role in the study of singularities (see \cite{diens1957}). Noor \cite{noor1983} studied the rate of growth of $H_{\tilde{q}}(n)$ as $n \rightarrow \infty$ for functions in $\mathcal{S}$  with bounded boundary. Different choices of $\tilde{q}$ and $n$ yields various types of Hankel determinants, such as for $\tilde{q}=2$ and $n=1$, the famous Fekete-Szeg\"o functional is given by $H_2(1):=a_3-a_2^2$. Furthermore, the generalized Fekete-Szeg\"o functional is given by $a_3-\mu a_2^2$, where $\mu$ is either real or complex. For $\tilde{q}=n=2$, we have second order Hankel determinant $H_2(2):=a_2 a_4-a_3^2$. Also, another type of second order Hankel determinant is obtained by taking $\tilde{q}=2$ and $n=3$, mathematically written as  $H_2(3) := a_3a_5-a_4^2$. The estimations of the sharp bounds for these $H_{\tilde{q}}(n)$ is obtained by many authors for various sub-classes of $\mathcal{A}$ (see~\cite{cho,raza2013,zaprawa2016}). Third order Hankel determinant, given by 
\begin{equation}\label{h3}
	H_3(1) = a_3(a_2 a_4-a_3^2)-a_4(a_4-a_2 a_3)+a_5(a_3-a_2^2),\end{equation} is obtained when $\tilde{q}=3$ and $n=1$. Sharp bound of $|H_3(1)|$ was not obtained for any class of analytic functions before 2018. It is achieved by Kowalczyk $et$ $al.$~\cite{kowalczyk22018}, for functions in $\mathcal{A}$ satisfying $\RE (f(z)/z) > \alpha \text{, }\alpha \in [0,1)$ and in~\cite{kowalczyk2018} for convex functions. Following which, Banga and Kumar~\cite{banga} recently derived sharp bound of third Hankel determinant as $|H_3(1)| \leq 1/36$ for functions in $\mathcal{SL^*}$, which earlier was calculated to be $43/576$ in~\cite{raza2013}. Lecko $et$ $al.$~\cite{lecko} found that $1/9$ is the
sharp bound of the third Hankel determinant for starlike functions of order $1/2$. The credit of initiation of sharp bound of $|H_3(1)|$ goes
to Kwon $et$ $al.$~\cite{kwon2018} who deduced $p_4$ in terms of $p_1$, where $p_i's$ are the coefficients of the functions in the Carath\'{e}odory class $\mathcal{P}$, defined by:
\begin{align*}
	p(z)= 1+ p_1 z+p_2 z^2+p_3 z^3+p_4 z^4+\cdots \text{ } (z \in \mathbb{D}).
\end{align*}

Let us recall the $q-$derivative of a complex valued function defined on a subset of $\mathbb{C}$, defined as below 
$$ (D_qf)(z) = \begin{cases}
	\dfrac{f(z)-f(qz)}{(1-q)z}, & z \neq 0 \\
	f'(0), & z=0,
\end{cases}$$ where $q \in (0,1).$ Whenever $f$ is differentiable on a given subset of $\mathbb{C}$, the above definition of $q-$derivative implies
$$ \lim_{q\rightarrow 1^{-}}(D_qf)(z) = \lim_{q\rightarrow 1^{-}} \dfrac{f(z)-f(qz)}{(1-q)z} = f'(z).$$ Furthermore, Taylor series expansion of $f$ yields that
$$ (D_qf)(z) = 1+\sum_{n=2}^{\infty}[n]_q a_n z^{n-1},$$ where 
$$ [n]_q = 
\sum_{k=0}^{n-1} q^k = 1+q+q^2+\cdots + q^{n-1},~n \in \mathbb{N}.$$ The initiation of the above defined $q-$calculus is done by Jackson~\cite{jack}. In Geometric Function Theory, sub-classes of normalized analytic functions have been studied with different view points. Ismail $et$ $al.$~\cite{ismail} generalized the class $\mathcal{S}^*$ of starlike functions by introducing a new class with the usage of $q-$calculus. This marked the beginning of introduction of $q-$version of various classes in Geometric Function Theory. For instance, Srivastava and Bansal~\cite{sriq} study a certain family of $q-$Mittag-Leffler functions and Mahmood $et$ $al.$~\cite{mahq} dealt  $q-$starlike functions associated with conic domains. Recently, Khan $et$ $al.$~\cite{khanq} used $q-$derivative operator to define a new subclass of starlike functions related with the lemniscate of Bernoulli, given as
\begin{align*}
	\mathcal{SL}^*_q :=\left\{f \in \mathcal{A}: \dfrac{z (D_qf)(z)}{f(z)} \prec \sqrt{\dfrac{2(1+z)}{2+(1-q)z}},~~z \in \mathbb{D}\right\}, \end{align*} or equivalently, a function $f \in \mathcal{A}$ is in $\mathcal{SL}^*_q$ if it satisfies the following 
$$  \left|\left(\dfrac{z(D_qf)(z)}{f(z)}\right)^2-\dfrac{1}{1-q}\right|< \dfrac{1}{1-q}.$$ This implies on choosing $\omega =  z(Dq f )(z)/f (z)$, the analytic characterization of the class $\mathcal{SL}^*_q$ can be expressed as $|\omega^2-1/(1-q)|<1/(1-q),$ which is the interior of the right loop of the lemniscate of Bernoulli.	
The specialty of this class lies in the fact that it reduces to a well-known class $\mathcal{SL^*}$, when $q\rightarrow 1^-$. The authors in~\cite{khanq} obtained the sharp bounds of Fekete-Szeg\"o functional, $|H_2(2)|$, initial coefficients $a_2$, $a_3$, $a_4$ and $a_5$ and upper bound of third Hankel determinant for functions in $\mathcal{SL}^*_q$.

Our study focuses on the estimation of sharp bound of $|H_3(1)|$ for functions in $\mathcal{SL}^*_q$ and $\mathcal{S}^*$. It was found in~\cite{baba} that $|H_3(1)| \leq 16$ for functions in $\mathcal{S}^*$, which is improved by Zaprawa~\cite{zaprawa2017}, wherein he proved $|H_3(1)| \leq 1$. Later in~\cite{kwon2019}, it was further improved to $8/9$. Again, in 2021, Zaprawa $et$ $al.$~\cite{zaprawa21} calculated the same to be $5/9$, to which we eventually improve in the present paper to a sharp estimate of $4/9$. In addition, we obtain $|H_3(1)| \leq \tfrac{(1+q)^2}{16 q^2(1+q+q^2)^2}$ for functions in $\mathcal{SL^*}_q$. This bound apart from being sharp is an improvement over the bound obtained in~\cite{khanq}. Moreover, for $q \rightarrow 1^-$, this bound reduces to earlier known sharp bound for $\mathcal{SL}^*$~\cite{banga}. We also give extremal functions to justify our claims.


We state below a lemma for the formulae of $p_2$, $p_3$ \cite{libera1982} and $p_4$ \cite{kwon2018} in order to prove our results.
\begin{lemma} \label{p1p2p3}
	Let $p \in \mathcal{P}$ and of the form $1+\sum\limits_{n=1}^{\infty}p_nz^n.$ Then
	\begin{align*}
		2p_2=p_1^2+\lambda(4-p_1^2),
	\end{align*}
	\begin{align*}
		4p_3=p_1^3+2p_1(4-p_1^2)\lambda-p_1(4-p_1^2)\lambda^2+2(4-p_1^2)(1-|\lambda|^2)\mu
	\end{align*}
	and
	\begin{align*}
		8p_4=&p_1^4+(4-p_1^2)\lambda(p_1^2(\lambda^2-3\lambda+3)+4\lambda)\nonumber\\&-4(4-p_1^2)(1-|\lambda|^2)(p_1(\lambda-1)\mu+\overline{\lambda}\mu^2-(1-|\mu|^2)\delta),
	\end{align*}
	for some $\delta$, $\lambda$ and $\mu$ such that $|\delta|\leq1$, $|\lambda|\leq1$ and $|\mu|\leq 1.$\end{lemma}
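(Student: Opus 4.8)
The plan is to derive all three identities from a single device: the passage from $\mathcal{P}$ to the class of Schwarz functions, followed by a few steps of Schur's algorithm (the iterated Schwarz--Pick lemma). Throughout we take $p_1\in[0,2]$, the normalization in which these formulae are actually used; since $p(z)\mapsto p(\zeta z)$ with $|\zeta|=1$ merely rotates the coefficients, this costs nothing. Write $\omega(z)=\dfrac{p(z)-1}{p(z)+1}=c_1z+c_2z^2+\cdots$, a Schwarz function, so that $p=(1+\omega)/(1-\omega)$. Comparing coefficients in $p(z)-1=\omega(z)\bigl(p(z)+1\bigr)$ gives the triangular relations
\begin{gather*}
	p_1=2c_1,\qquad p_2=2\bigl(c_2+c_1^2\bigr),\qquad p_3=2\bigl(c_3+2c_1c_2+c_1^3\bigr),\\
	p_4=2\bigl(c_4+2c_1c_3+c_2^2+3c_1^2c_2+c_1^4\bigr),
\end{gather*}
so it remains only to parametrize $c_1,\dots,c_4$.

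To do so, put $\psi_0(z)=\omega(z)/z$, which maps $\mathbb{D}$ into $\overline{\mathbb{D}}$ with $\psi_0(0)=c_1$; in particular $|c_1|\le1$. Assuming $|c_1|<1$ (the boundary case forces $\omega(z)=c_1z$ and is immediate), post-composing $\psi_0$ with the disk automorphism $w\mapsto(w-c_1)/(1-\overline{c_1}\,w)$ and dividing out the resulting zero at the origin yields an analytic map $\psi_1\colon\mathbb{D}\to\overline{\mathbb{D}}$; set $\lambda:=\psi_1(0)$. Repeating the construction with $\psi_1$ (recentred at $\lambda$) produces $\psi_2$ and $\mu:=\psi_2(0)$, and once more gives $\psi_3$ and $\delta:=\psi_3(0)$. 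By the Schwarz lemma $|\lambda|,|\mu|,|\delta|\le1$. Unwinding the substitutions expresses $c_2,c_3,c_4$ as explicit polynomials in $c_1$ and $\lambda,\overline{\lambda},\mu,\overline{\mu},\delta$; for instance $c_2=(1-c_1^2)\lambda$ and $c_3=(1-c_1^2)\bigl[(1-|\lambda|^2)\mu-c_1\lambda^2\bigr]$, with $c_4$ furnished by the longer expression coming out of the third step.

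Substituting these into the relations above and rewriting throughout via $p_1=2c_1$ and $4-p_1^2=4(1-c_1^2)$, one collects terms to obtain, in turn, the stated formula for $p_2$, the Libera--Z\l otkiewicz formula for $p_3$, and the Kwon et al.\ formula for $p_4$. Equivalently, one may simply invoke \cite{libera1982} for the first two and \cite{kwon2018} for the third, where the calculation is carried out in full.

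There is no conceptual difficulty here; the entire weight of the proof lies in the bookkeeping --- running Schur's algorithm through three recentrings while correctly carrying along the factors $1-|\lambda|^2$ and $1-|\mu|^2$, and, for $p_4$ in particular, a lengthy but mechanical algebraic reduction. The one point that calls for care is the normalization $p_1\in[0,2]$: it is exactly this that makes $c_1$ real, so that the complex conjugates which the algorithm naturally attaches to $c_1$ disappear and one is left with the ``holomorphic'' right-hand sides displayed above (with $4-p_1^2$ in place of $4-|p_1|^2$); the conjugates surviving on the genuinely free parameters --- for example in the summand $\overline{\lambda}\mu^2$ of the $p_4$ identity --- are of course harmless.
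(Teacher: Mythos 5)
The paper does not prove this lemma at all: it is stated as a quotation of known results, with the $p_2$ and $p_3$ formulae attributed to \cite{libera1982} and the $p_4$ formula to \cite{kwon2018}. Your proposal therefore goes further than the paper, and what you sketch is correct and is in substance the derivation carried out in those references: pass to the Schwarz function $\omega=(p-1)/(p+1)$, read off the triangular relations $p_1=2c_1$, $p_2=2(c_2+c_1^2)$, $p_3=2(c_3+2c_1c_2+c_1^3)$, $p_4=2(c_4+2c_1c_3+c_2^2+3c_1^2c_2+c_1^4)$ (all of which check out), and parametrize $c_2,c_3,c_4$ by three steps of Schur's algorithm. Your expressions $c_2=(1-c_1^2)\lambda$ and $c_3=(1-c_1^2)\bigl[(1-|\lambda|^2)\mu-c_1\lambda^2\bigr]$ do reproduce the stated $p_2$ and $p_3$ identities exactly, and your remark about the normalization $p_1\in[0,2]$ correctly identifies why the conjugates on $c_1$ disappear (the lemma as displayed is only valid under that normalization, which is how the paper uses it). The only part left implicit is the explicit $c_4$ from the third recentring, which carries the real computational weight of the $p_4$ formula; since you also note that one may simply invoke \cite{kwon2018}, where that computation is done in full, this is an acceptable level of detail for a lemma the paper itself merely cites.
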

\section{Main Results}

This section begins with the following result.
\begin{theorem}\label{qstar}
	Let $q \in (0,1)$ and $f \in \mathcal{SL}^*_{q}$ of the form $f(z) = z+\sum_{n=2}^{\infty}a_n z^n.$ Then we have
	\begin{align*}
		|H_3(1)| \leq \dfrac{(1+q)^2}{16 q^2 (1+q+q^2)^2}.\end{align*}\end{theorem}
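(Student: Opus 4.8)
The plan is to express the coefficients $a_2,a_3,a_4,a_5$ of $f\in\mathcal{SL}^*_q$ in terms of the Carathéodory coefficients. Writing $z(D_qf)(z)/f(z) = \sqrt{2(1+w(z))/(2+(1-q)w(z))}$ for a Schwarz function $w$, and setting $p(z) = (1+w(z))/(1-w(z)) \in \mathcal{P}$, one can solve for the $a_n$ as polynomials in $p_1,p_2,p_3,p_4$ with coefficients depending on $q$ (through $[n]_q$). Substituting these into the expansion \eqref{h3} of $H_3(1)$ gives $H_3(1)$ as an explicit rational expression in $p_1,\dots,p_4$ and $q$. First I would carry out this substitution, collecting $H_3(1)$ into a manageable polynomial form; I expect the overall scale factor $(1+q)^2/\bigl(16 q^2(1+q+q^2)^2\bigr)$ to factor out naturally, reducing the target inequality to a normalized bound $\leq 1$ on the remaining expression.

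Next, I would invoke Lemma~\ref{p1p2p3} to replace $p_2,p_3,p_4$ by their representations in terms of $p_1 \in [0,2]$ (using the rotational invariance of $|H_3(1)|$ to assume $p_1 = p \geq 0$) and the free parameters $\lambda,\mu,\delta$ with $|\lambda|,|\mu|,|\delta|\leq 1$. After this substitution $H_3(1)$ becomes a function of $p\in[0,2]$ and $\lambda,\mu,\delta$ in the closed unit disk. The standard strategy is then: (i) apply the triangle inequality in $\delta$ to eliminate $\delta$, which is linear, bounding $|H_3(1)|$ by an expression of the form $A(p,\lambda,\mu) + B(p,\lambda)\,(1-|\mu|^2)$; (ii) reduce to the case $\lambda,\mu \in [0,1]$ real and nonnegative by estimating moduli termwise (here one must check that the coefficient signs cooperate, which is where care is needed); (iii) maximize the resulting real function $\Phi(p,\lambda,\mu)$ over the box $[0,2]\times[0,1]\times[0,1]$.

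For the final optimization I would first treat $\Phi$ as a quadratic (or at worst low-degree polynomial) in $\mu$ for fixed $p,\lambda$, locating its maximum on $[0,1]$ — typically attained at $\mu=0$ or an interior critical point — then substitute back and optimize the resulting function of $(p,\lambda)$ on the square, and finally check the boundary pieces $p=0$, $p=2$, $\lambda=0$, $\lambda=1$ separately. I anticipate that the maximum is achieved at an interior-type configuration (likely $p=0$, so that only $p_2,p_3$-type terms survive, giving $|a_2|=0$ and the determinant governed by $a_3a_5 - a_4^2$), which should yield exactly the value $1$ in the normalized problem. The main obstacle is the bookkeeping in step (ii)–(iii): the $q$-dependent coefficients make the sign analysis fussier than in the $q=1$ case treated in~\cite{banga}, and one must verify that no boundary stratum or alternative critical point exceeds the claimed bound for all $q\in(0,1)$ — this uniform-in-$q$ verification is the delicate part. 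To confirm sharpness I would then exhibit the extremal function, the natural candidate being $f_0$ with $z(D_qf_0)(z)/f_0(z) = \sqrt{2(1+z^2)/(2+(1-q)z^2)}$ (the $p_1=0$ case), and check directly that $|H_3(1)|$ attains the stated bound, recovering the $\mathcal{SL}^*$ extremal of~\cite{banga} as $q\to 1^-$.
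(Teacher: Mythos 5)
Your overall strategy is the same as the paper's: expand the $a_n$ via the Carath\'eodory function, substitute into \eqref{h3}, use Lemma~\ref{p1p2p3} to write $p_2,p_3,p_4$ in terms of $p=p_1\in[0,2]$ and $\lambda,\mu,\delta\in\overline{\mathbb{D}}$, eliminate $\delta$ by the triangle inequality, pass to $x=|\lambda|$, $y=|\mu|$ by termwise modulus estimates, and maximize the resulting real function over the cuboid $[0,2]\times[0,1]\times[0,1]$ by examining the interior, the six faces and the twelve edges. So there is no methodological divergence to report. However, two points need correction. First, your candidate extremal function is wrong: you propose $z(D_qf_0)(z)/f_0(z)=\sqrt{2(1+z^2)/(2+(1-q)z^2)}$, i.e.\ $\omega(z)=z^2$, which corresponds to $p_1=0$, $p_2=2$, hence $(p,x)=(0,1)$ in the parametrization. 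On that edge the normalized objective equals $(5-q)/\bigl(128q^2(1+q^2)\bigr)$, which is \emph{strictly smaller} than the claimed bound $(1+q)^2/\bigl(16q^2(1+q+q^2)^2\bigr)$ (e.g.\ $1/64$ versus $1/36$ as $q\to1^-$). The maximum is actually attained at $(p,x,y)=(0,0,1)$, i.e.\ $p_1=p_2=0$, $|p_3|=2$, so the extremal Schwarz function is $\omega(z)=z^3$ and the correct extremal satisfies $z(D_q\tilde f)(z)/\tilde f(z)=\sqrt{2(1+z^3)/(2+(1-q)z^3)}$; your sharpness verification as stated would fail. (Relatedly, with $p=0$ and $a_2=0$ the determinant reduces to $a_3a_5-a_4^2-a_3^3$, and at the true extremal $a_3=0$ too, so $H_3(1)=-a_4^2$.)

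Second, your write-up is a plan rather than a proof precisely at the point you yourself identify as delicate: the uniform-in-$q$ elimination of interior critical points, the six faces, and the twelve edges. In the paper this occupies essentially the entire argument (e.g.\ showing $\partial T/\partial p\neq0$ along $y=y_0$ on the region where $y_0\in(0,1)$ can occur, and bounding each face/edge value by $(1+q)^2/\bigl(16q^2(1+q+q^2)^2\bigr)$ via explicit polynomial inequalities in $q$). Nothing you outline is wrong in principle, but until those $q$-dependent sign and maximization checks are actually carried out, the bound is not established; and the expectation that the maximum in $\mu$ sits at $\mu=0$ or an interior critical point is also off --- it sits at the boundary $|\mu|=1$.
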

\begin{proof} For $f \in \mathcal{SL}^*_{q}$, we refer~\cite{khanq} for the expressions of $a_2$, $a_3$ and $a_4.$ On the similar lines, we compute
	\begin{align*} a_5 = &~\dfrac{1}{32768 q^4 (1 + q^2)(1 + q + q^2)}\bigg(512 p_1 p_3 q^2 (2 - 10 q - 8 q^2 - 9 q^3 + 3 q^4)  + 
		p_1^4 (8- 140 q + 802 q^2\\&\quad - 1435 q^3  - 340 q^4 - 1193 q^5 + 
		1015 q^6 - 320 q^7 + 35 q^8) + 
		32 p_1^2 p_2  q (6 - 68 q + 175 q^2 + 89 q^3\\&\quad  + 148 q^4 - 93 q^5 + 
		15 q^6) + 
		256 q^2 (1 + q + q^2)(16 p_4 q  + p2^2 (2 - 13 q + 3 q^2))\bigg).\end{align*}
	Now substituting the values of above $a_i's$ in \eqref{h3} with $p:=p_1 \in [0,2]$, we obtain
	\begin{align*}
	H_3(1) :=~&\dfrac{1}{4194304~q^2 (1 + q^2) (1 + q + 
		q^2)^2} \bigg(-8192 p p_2 p_3 (-14 - 28 q - 13 q^2 - 28 q^3 - 12 q^4 + 
		q^5)\\& - 512 p_1^3 p_3 (-14 - 10 q - 217 q^2  + 23 q^3 - 8 q^4 + 
		5 q^5 + q^6)   +16 p_1^4 p_2 (-31 + 1111 q - 10148 q^2 \\&+ 3026 q^3- 594 q^4 - 84 q^5  - 19 q^6 + 3 q^7)  + 
		p_1^6 (239 - 4972 q + 35429 q^2 - 13002 q^3  + 3964 q^4\\& + 370 q^5 + 
		63 q^6  - 44 q^7 + q^8)   + 
		4096 (-16 p_3^2 (1 + q)^2 (1 + q^2) + 16 p_2 p_4 (1 + q + q^2)^2 \\&+ 
		p_2^3 (-13 + q) (1 + q + q^2)^2)  + 
		256 p_1^2 (16 p_4 (-15 + q) (1 + q + q^2)^2+ 
		p_2^2 (-27 - 102 q\\& + 670 q^2 - 188 q^3 + 6 q^4 + 14 q^5 + 
		3 q^6))\bigg).
	\end{align*}
	Applying Lemma \ref{p1p2p3} in the above equation for the values of $p_2$, $p_3$ and $p_4$ and further reducing it to the simpler form, we arrive at 
	\begin{align} \label{h31} H_3(1) = \dfrac{\tau_1(p,\lambda) +\tau_2(p,\lambda) \mu + \tau_3(p,\lambda)\mu^2+\zeta(p,\lambda,\mu) \delta}{4194304~q^2 (1 + q^2) (1 + q + q^2)^2},\end{align} 
	whenever $\delta$, $\mu$, $\lambda \in \overline{\mathbb{D}}$ and 
	\begin{align*}
		&\tau_1(p,\lambda) :=  A p^6 +p^2(4-p^2)\lambda\bigg(8 (-15 + 183 q - 804 q^2 + 434 q^3 - 242 q^4 - 20 q^5 - 3 q^6 + 3 q^7)p^2   
		\\ &  \qquad \qquad \quad +64(45+66 q+262 q^2+28 q^3+62 q^4+6 q^5+3 q^6) (4-p^2)\lambda -512(7+15 q-3q^2\\ &  \qquad \qquad \quad  +17 q^3+5 q^4-q^5)(4-p^2)\lambda^2-2048(7-q)(1+q+q^2)^2\lambda +4096q^2(4-p^2)\lambda^3\\ &  \qquad \qquad \quad -512(7-q)(1+q+q^2)^2 p^2\lambda^2 +128(22+50 q+35 q^2+59 q^3+20 q^4+q^5+q^6)p^2\lambda\bigg) \\ &  \qquad \qquad \quad-2048(5-q)(1+q+q^2)^2(4-p^2)^2\lambda^3, \\ 
		&\tau_2(p,\lambda) :=  (4-p^2)(1-|\lambda|^2)\bigg(256(6+2q+41q^2-15 q^3-5 q^5-q^6)p^3+2048(7-q)(1+q+q^2)^2p^3\lambda \\ & \qquad \qquad \quad+p(4-p^2)\lambda(2048(6+12 q+5 q^2+12 q^3 +4 q^4-q^5)-16384 q^2  \lambda)\bigg), \\ 
		&\tau_3(p,\lambda) := (4-p^2)(1-|\lambda|^2)(2048(7-q)(1+q+q^2)^2p^2 \overline{\lambda}-(4-p^2)(16384 q^2|\lambda|^2\\ & \qquad \qquad \quad+16384(1+q^2)(1+q)^2)), \\ 
		&\zeta(p, \lambda,\mu) := (4-p^2)(1-|\lambda|^2)(1-|\mu|^2)(1+q+q^2)^2(-(14336-2048 q)p^2+16384(4-p^2)\lambda),
	\end{align*}
	where $A := 55 - 308 q + 1349 q^2 - 698 q^3 + 620 q^4 - 46 q^5 - 25 q^6 - 
	20 q^7 + q^8$. Taking modulus over equation~\eqref{h31} and applying triangle inequality, we get
	$$ |H_3(1)| \leq \dfrac{|\tau_1(p,\lambda)|+|\tau_2(p,\lambda)|y+|\tau_3(p,\lambda)|y^2+|\zeta(p,\lambda,\mu)|}{4194304~q^2 (1 + q^2) (1 + q + 
		q^2)^2} \leq \tilde{T}(p,x,y),$$
	where $x := |\lambda|$, $y := |\mu|$ and the fact $|\delta| \leq 1$ and
	\begin{align*}
		\tilde{T}(p,x,y) & := \dfrac{t_1(p,x)+t_2(p,x)y+t_3(p,x)y^2+t_4(p,x)(1-y^2)}{4194304~q^2 (1 + q^2) (1 + q + 
			q^2)^2} \\ & =: \dfrac{T(p,x,y)}{4194304~q^2 (1 + q^2) (1 + q + 
			q^2)^2}.
	\end{align*}
	with
	\begin{align*}
		&t_1(p,x) :=  A p^6 +p^2(4-p^2)x\bigg(8 (15 - 183 q + 804 q^2 - 434 q^3 + 242 q^4 + 20 q^5 + 3 q^6   - 3 q^7)p^2 
	\\ &  \qquad \qquad \quad	+64(45+66 q+262 q^2+28 q^3+62 q^4+6 q^5+3 q^6)(4-p^2)x +512(7+15 q-3q^2\\ &  \qquad \qquad \quad +17 q^3+5 q^4-q^5)(4-p^2)x^2+2048(7-q)(1+q+q^2)^2 x+4096(4-p^2)x^3\\ &  \qquad \qquad \quad +512(7-q)(1+q+q^2)^2 p^2x^2 +128(22 +50 q+35 q^2+59 q^3+20 q^4+q^5+q^6)p^2x\bigg) \\ &  \qquad \qquad \quad+2048(5-q)(1 +q+q^2)^2(4-p^2)^2x^3, \\ &  
		t_2(p,x) :=  (4-p^2)(1-x^2)\bigg(256(6+2q+41q^2-15 q^3-5 q^5-q^6)p^3+2048(7  -q)(1+q+q^2)^2p^3x \\ &  \qquad \qquad \quad+p(4-p^2)x(2048(6+12 q+5 q^2+12 q^3 +4 q^4-q^5)+16384 q^2 x)\bigg), \\ &
		t_3(p,x) := (4-p^2)(1-x^2)(2048(7-q)(1+q+q^2)^2p^2 x+(4-p^2)(16384 q^2 x^2 \\ &  \qquad \qquad \quad +16384(1+q^2)(1+q)^2)), \\ &
		t_4(p,x) := (4-p^2)(1-x^2)(1+q+q^2)^2((14336-2048 q)p^2 +16384(4-p^2)x).
	\end{align*}
	In order to achieve the desired bound, we need to maximize $T(p,x,y)$ in the closed cuboid $\mathfrak{C}: [0,2] \times [0,1] \times [0,1]$. We accomplish this by estimating maximum values in the interior of $\mathfrak{C}$, interior of the six faces and finally on the twelve edges.
	
	I. We begin with interior points of $\mathfrak{C}$, which means taking $(p,x,y) \in (0,2) \times (0,1) \times (0,1)$. \\
	For this, we calculate
	\begin{align*}\dfrac{\partial T}{\partial y} = &~(4-p^2)(1-x^2)(2y(16384(4-p^2)((1+q^2)(1+q)^2+q^2 x^2)+2048 p^2 x(7-q)(1+q+q^2)^2\\&-((14336-2048 q)p^2+16384(4-p^2)x)(1+q+q^2)^2)+256 (6+2q+41 q^2-15 q^3-5q^5\\&-q^6)p^3+2048(7-q)(1+q+q^2)^2p^3x+2048 p x(4-p^2)(6+12q+5q^2+12q^3+4q^4-q^5)\\&+16384 p q^2(4-p^2)x^2).
	\end{align*}
	On solving $\partial T/\partial y = 0$, we obtain $y=y_0$, given as
	$$y_0 := \dfrac{\tilde{A}}{2048(1-x)\left((7-q)(1+q+q^2)^2p^2-8(4-p^2)\left(\tfrac{1+2q+2q^2+2q^3+q^4}{q^2}-x\right)\right)},$$ where $\tilde{A}:= p^3(128(6+2q+41q^2-15q^3-5q^5-q^6)+1024(7-q)(1+q+q^2)^2x)+1024 p x(4-p^2)(6+12 q+5q^2+12q^3+4q^4-q^5+8q^2x).$ For $y_0 \in (0,1)$, we must have
	\begin{align*}(7-q)(1+q+q^2)^2p^2 > 8(4-p^2)\left(\dfrac{1+2q+2q^2+2q^3+q^4}{q^2}-x\right)\end{align*} and 
	\begin{align}\label{y0}
		\tilde{A} +16384 (4-p^2) (1-x) \left(\dfrac{1+2q+2q^2+2q^3+q^4}{q^2}-x\right) < 2048 (1-x)(7-q)(1+q+q^2)^2p^2.	
	\end{align}
	Let us assume $p \rightarrow 2$, then there exists $x \in (0,\tfrac{101}{216})$ for every $q \in (0,1)$ such that \eqref{y0} holds. Moreover when we consider $x \in [\tfrac{101}{216},1)$, then there does not exist any $p \in (0,2)$ for all $q \in (0,1)$ such that \eqref{y0} holds. Assuming $x \rightarrow 0$, we compute \eqref{y0} holds for $p \geq 1.48855$ for every $q \in (0,1).$ In fact whenever $p \in (0,1.48855)$, there does not exist any $ x \in (0,1)$ for all $q \in (0,1)$ such that \eqref{y0} holds. Thus we conclude possible solution exists in $[1.48855,1) \times (0,\tfrac{101}{216})$ for inequality \eqref{y0}. A computation shows $$\dfrac{\partial T}{\partial p}\bigg|_{y=y_0} \neq 0, $$ in this interval. Therefore, there does not exist any critical point in the interior of $\mathfrak{C}$.
	
	II. Now we compute the maximum value of $T$ in the interior of all the six faces of $\mathfrak{C}$.\\
	On the face $p=0$, $T(p,x,y)$ reduces to
	\begin{align} \label{0xy} T(0,x,y) = &~262144 (1 - x^2) ((1+ q^2) (1 + q)^2  + q^2 x^2-(1+q+q^2)^2x) y^2 \nonumber \\ &+ 
		32768 x  (1 + q + q^2)^2(x^2(5 - q)+8(1-x^2)),\end{align} which in turn differentiating with respect to $y$ becomes
	$$\dfrac{\partial T}{\partial y}=524288 y (1-x^2)(x-1)\bigg(x-\dfrac{1+2q+2q^2+2q^3+q^4}{q^2}\bigg) \neq 0 \quad x,y \in (0,1).$$ This clearly shows there does not exist any critical point for $T(0,x,y)$ in $(0,1)\times (0,1)$.
	
	On the face $p=2$, $$\tilde{T}(p,x,y)=\tilde{T}(2,x,y)= \dfrac{A}{65536  q^2 (1+q^2)(1+q+q^2)^2} \leq  \dfrac{(1+q)^2}{16 q^2 (1+q+q^2)^2},$$ $x,y \in (0,1)$, as we have $-4041 -8500 q - 6843 q^2 - 8890 q^3 - 3476 q^4 - 46 q^5 - 25 q^6 - 20 q^7 + q^8 \leq 0$ for $q \in (0,1).$
	
	On the face $x=0$, $T(p,x,y)$ becomes 
	\begin{align}\label{p0y2}
	T(p,0,y)=&~p^6 (55 - 308 q + 1349 q^2 - 698 q^3 + 620 q^4 - 46 q^5 - 25 q^6 - 20 q^7 + q^8) \nonumber \\ &+ 
		256 (4 - p^2) (-p^3 (-6 - 2 q - 41 q^2 + 15 q^3 + 5 q^5 + q^6) y  + 
		64 (4 - p^2)  (1 \nonumber \\ &+ q)^2 (1 + q^2) y^2 + 
		8 p^2 (-7 + q) (1 + q + q^2)^2 (-1 + y^2)):=h_1(p,y).
	\end{align}
	On solving $\tfrac{\partial h_1}{\partial y}=0$, we get
	\begin{equation} \label{y1} y=: y_1=  \dfrac{p^3 (-6 - 2 q - 41 q^2 + 15 q^3 + 5 q^5 + 
			q^6)}{16 (32 (1 + q)^2 (1 + q^2) + 
			p^2 (-15 - 29 q - 35 q^2 - 27 q^3 - 13 q^4 + q^5))}.\end{equation} For $0<p\leq 1.46$, we have $y_1\leq 0$ for every $q \in (0,1)$. There exists some $q \in (0,1)$ whenever  $p \in (1.46, 2)$ such that $y_1>0$.  On substituting \eqref{y1} in $\tfrac{\partial h_1}{\partial p}$ and simplifying further, we get $\tfrac{\partial h_1}{\partial p} \neq 0$, where 
	$p \in (1.46,2)\text{, } q\in (0,1)$.
	Thus $h_1(p,y)$ has no critical point in $(0,2) \times (0,1)$.
	
	On the face $x=1$, $T(p,x,y)$ reduces to  
	\begin{align} \label{p1y}  T(p,1,y)=: & -32768 (-5 + q) (1 + q + q^2)^2 + 
		p^6 (1 + q)^3 (-1 + 7 q + 19 q^2   + 9 q^3\nonumber \\& + q^4  + q^5)  + 
		1024 p^2 (77 + 146 q + 246 q^2 + 140 q^3 + 94 q^4 + 6 q^5 + 3 q^6) \nonumber \\& - 
		32 p^4(929 + 1783 q  + 2636 q^2 + 1666 q^3 + 878 q^4 - 4 q^5 + 
		29 q^6 + 3 q^7) =: h_2(p).\end{align} 
	On differentiating $h_2$ with respect to $p$, we obtain
	\begin{align*} \dfrac{\partial h_2}{\partial p} = &~6 p^5 (1 + q)^3 (-1 + 7 q + 19 q^2 + 9 q^3 + q^4 + q^5) + 
		2048 p (77 + 146 q + 246 q^2 + 140 q^3 + 94 q^4\\&+ 6 q^5 + 3 q^6) - 
		128 p^3 (929 + 1783 q + 2636 q^2 + 1666 q^3 + 878 q^4- 4 q^5 + 
		29 q^6 + 3 q^7),\end{align*}
	further which becomes $0$ at $p=0$ and $p=p_0$, given by
	$$p_0 := \sqrt{\dfrac{32 (929 + 1783 q + 2636 q^2 + 1666 q^3 + 878 q^4 - 4 q^5 + 
			29 q^6 + 3 q^7)}{
			3 (1 + q)^3 (-1 + 7 q + 19 q^2 + 9 q^3 + q^4 + q^5)} - \tilde{A}},$$ where $\tilde{A} = \dfrac{64 \sqrt{2}A_0}{3 (-1 + 4 q + 37 q^2 + 86 q^3 + 92 q^4 + 50 q^5 + 15 q^6 + 4 q^7 + 
		q^8)}$ and \begin{align*}A_0= (&
	107909 + 414041 q + 1008402 q^2 + 1557100 q^3 + 1804144 q^4 + 
	1471838 q^5+ 913014 q^6 \\&+ 363176 q^7 + 107408 q^8 + 9900 q^9 + 
	6570 q^{10} + 346 q^{11} + 41 q^{12} + 15 q^{13})^{(1/2)}.\end{align*} A calculation yields $p=0$ is a point of minima and $p_0$ is a point of maxima and maximum value is given by a huge mathematical expression in $q$ which is computed to be less than $262144(1+q)^2(1+q^2)$.			
	
	On the face $y=0$, we have $T(p,x,0) =: h_3(p,x)$, given by
	\begin{align*} h_3(p,x):= & p^6 (55 - 308 q + 1349 q^2 - 698 q^3 + 620 q^4 - 46 q^5 - 25 q^6- 20 q^7 + q^8)\\ & + 2048 (4 - p^2) (1 + q + q^2)^2 (-1 + x^2) (-32 x + p^2 (-7 + q + 8 x)) \\& +8 (4 - p^2)x (-1024 (-5 + q) (1 + q + q^2)^2 x^2 + 
	32 p^2 x (101 - 2 q^5\\& + 3 q^6 + 16 x + 2 q^4 (51 + 8 x) + 
	4 q^3 (29 + 20 x) + 2 q (85 + 24 x)\\& + 
	2 q^2 (207 - 64 x + 32 x^2)) - 
	p^4 (-15 + 3 q^7 + 8 x + 22 q^4 (-11\\& + 8 x) + 4 q^5 (-5 + 8 x) + 
	q^6 (-3 + 8 x) + q (183 - 272 x + 128 x^2) \\&+ 
	q^3 (434- 720 x + 384 x^2) + 
	4 q^2 (-201 + 384 x - 352 x^2 + 128 x^3))) . \end{align*} A calculation yields that there is no common solution to the system of equations $\tfrac{\partial h_3}{\partial x}=0$ and $\tfrac{\partial h_3}{\partial p}=0$ in $(0,2) \times (0,1).$ Similarly we can show that there does not exist any critical point for $T(p,x,1)$.
	
	III. Finally, we estimate the maximum value on the edges of the cuboid $\mathfrak{C}$.
	
	\noindent Starting with the $T(p,0,0) =: h_4(p)$, given by
	\begin{align*} h_4(p) = & (1 + q + q^2)^2 (4 p^2 (14336 - 2048 q) - 
		p^4 (14336 - 2048 q)) + (55 \\ & - 308 q  + 1349 q^2 - 698 q^3 + 
		620 q^4 - 46 q^5 - 25 q^6 - 20 q^7 + q^8) p^6,\end{align*} obtained from \eqref{p0y2}. On solving $\tfrac{\partial h_4}{\partial p} =0$, we get 
	either $p=0$ or $p =: p_0$, given by
	\begin{align*} p_0 := &\dfrac{1}{\sqrt{3 A}} \bigg(2048 (7 - q) (1 + q + q^2)^2-64 \sqrt{2} \bigg(
		23933 + 97507 q  +  203268 q^2 + 309564 q^3 + 313752 q^4 \\& + 
		250248 q^5 + 114774 q^6  + 34938 q^7 - 6156 q^8  - 644 q^9 + 
		1352 q^{10} + 192 q^{11} - 75 q^{12} + 
		3 q^{13}\bigg)^{1/2}\bigg)^{1/2}.\end{align*} We compute that the function $h_4(0)=0$ is a minimum value of $h_4(p)$ and $h_4(p_0)$ is a huge mathematical expression in $q$ which is also a maximum value of $h_4(p)$. Further we have $\tilde{T}(p_0,0,0) \leq (1+q)^2/16 q^2 (1+q+q^2)^2$. Substituting $y=1$ in equation~\eqref{p0y2}, we obtain
	\begin{align*} 
		T(p,0,1) = h_5(p) = &(4 - p^2) (-256 (-6 - 2 q - 41 q^2 + 15 q^3 + 5 q^5 + q^6) p^3 \\&  +  (4 - 
		p^2) (16384 (1 + q^2) (1 + q)^2)) + A p^6 .\end{align*} The function $h_5(p)$ is a decreasing function of $p$ for all $q$. Thus
	$$ \max_{p \in [0,2]} \tilde{T}(p,0,1) = \tilde{T}(0,0,1) = \dfrac{(1+q)^2}{16 q^2 (1+q+q^2)^2}.$$ 
	Form equation~\eqref{p1y}, which is independent of $y$, we get
	$\tilde{T}(p,1,0) = \tilde{T}(p,1,1) = \tilde{T}(p,1,y)$. Thus $\tilde{T}(p,1,0) =\tilde{T}(p,1,1) \leq \tfrac{(1+q)^2}{16 q^2 (1+q+q^2)^2}.$ Substituting $x=0$ in ~\eqref{0xy}, we obtain
	$\tilde{T}(0,0,y) = y^2(1+q)^2/16 q^2(1+q+q^2)^2,$ which is clearly an increasing function of $y$ for all $q$ and we have
	$$\tilde{T}(0,0,y) \leq \tilde{T}(0,0,1) = \dfrac{(1+q)^2}{16 q^2 (1+q+q^2)^2}.$$ Evaluating~\eqref{p1y} at $p=0$, we get
	$$ \tilde{T}(0,1,y) = \dfrac{5-q}{128 q^2 (1+q^2)}.$$ The value of $\tilde{T}(p,x,y)$ on the edges $p=2$, $x=1$; $p=2$, $x=0$; $p=2$, $y=0$; $p=2$, $y=1$ are all respectively equal to $\tilde{T}(2,1,y)=\tilde{T}(2,0,y) =\tilde{T}(2,x,0)=\tilde{T}(2,x,1)= \tilde{T}(2,x,y)$  as $\tilde{T}(2,x,y)$ is independent of both $x$ and $y$, which further equals to
	\begin{align*}  \dfrac{A}{65536  q^2 (1+q^2)(1+q+q^2)^2} \leq \dfrac{(1+q)^2}{16 q^2 (1+q+q^2)^2}. \end{align*} Evaluating equation~\eqref{0xy} at $y=0$, we deduce
	$$T(0,x,0) = h_6(x) = 32768 (1 + q + q^2)^2 x (8 - (3 + q) x^2).$$
	On solving $h_6'(x) =0$, we get 
	$$ x = x_0 := \dfrac{512  (1 + q + q^2)}{\sqrt{294912 + 688128 q + 1081344 q^2 + 
			884736 q^3 + 491520 q^4 + 98304 q^5}}.$$ A computation shows that $x _0$ is a point of maxima and maximum value is given by
	$$ \max_{x \in [0,1]} h_6(x) = h_6(x_0) = \dfrac{\sqrt{2}(1+q+q^2)}{12\sqrt{
			3 (3 + q)}}\text{ }(0 < q < 1).$$ Also, we have
	$$ \max_{0\leq x\leq1} \tilde{T}(0,x,0) \leq \dfrac{(1+q)^2}{16 q^2 (1+q+q^2)^2}.$$ Now evaluating~\eqref{0xy} at $y=1$, we obtain
	$$ T(0,x,1) = 262144 (1 - x^2) ((1 + q^2) (1 + q)^2 + q^2 x^2) + 
	32768 x^3 (5 - q) (1 + q + q^2)^2,$$ which is clearly a decreasing function of $x$ and attains maximum value at $x=0$, given by $\tfrac{(1+q)^2}{16 q^2(1+q+q^2)^2}$. \\
	Altogether I-III, yield $|H_3(1)| \leq \tfrac{(1+q)^2}{16 q^2(1+q+q^2)^2}.$ The result is sharp as equality occurs for the function $\tilde{f}: \mathbb{D} \rightarrow \mathbb{C}$, satisfying the following
	\begin{equation*}
		\dfrac{z (D_q\tilde{f})(z)}{\tilde{f}(z)} = \sqrt{\dfrac{2(1+z^3)}{2+(1-q)z^3}}.
\end{equation*}\end{proof}
Let $q\rightarrow 1^-$ in the above Theorem,  then it reduces to the following result obtained by Banga and Kumar~\cite{banga}.
\begin{corollary}
	Let $f \in \mathcal{SL}^*$. Then $|H_3(1)|  \leq 1/36$.
\end{corollary}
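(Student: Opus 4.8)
The plan is to derive the Corollary from Theorem~\ref{qstar} by letting $q\to1^-$. First I would recall (as already noted in the Introduction) that $\sqrt{2(1+z)/(2+(1-q)z)}\to\sqrt{1+z}$ locally uniformly on $\mathbb{D}$ as $q\to1^-$, so that the defining subordination of $\mathcal{SL}^*_q$ degenerates exactly to that of $\mathcal{SL}^*$. Consequently the expressions for $a_2,a_3,a_4,a_5$ in terms of the Carath\'eodory coefficients $p_i$ that were used in the proof of Theorem~\ref{qstar} are rational in $q$ and regular at $q=1$, where they reduce to the coefficient representation appropriate to $\mathcal{SL}^*$.

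Next I would observe that every ingredient of the proof of Theorem~\ref{qstar}—the identity \eqref{h31}, the polynomials $\tau_1,\tau_2,\tau_3,\zeta$, the majorant $\tilde T(p,x,y)$, and the cuboid $\mathfrak{C}=[0,2]\times[0,1]\times[0,1]$—depends polynomially, hence continuously, on $q$ in a neighbourhood of $q=1$. Since $\mathfrak{C}$ is compact, the three-part extremal analysis (interior, six faces, twelve edges) carried out there specializes verbatim to $q=1$, so the bound valid for $\mathcal{SL}^*$ is precisely $\lim_{q\to1^-}\dfrac{(1+q)^2}{16q^2(1+q+q^2)^2}=\dfrac{4}{16\cdot 9}=\dfrac{1}{36}$. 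For sharpness I would pass to the limit in the extremal function of Theorem~\ref{qstar}: as $q\to1^-$ the function $\tilde f$ defined by $z(D_q\tilde f)(z)/\tilde f(z)=\sqrt{2(1+z^3)/(2+(1-q)z^3)}$ tends to the solution of $zf'(z)/f(z)=\sqrt{1+z^3}$, which lies in $\mathcal{SL}^*$ and attains $|H_3(1)|=1/36$; this is exactly the extremizer exhibited in \cite{banga}.

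There is essentially no serious obstacle here: the only point deserving attention is to confirm that the limiting class is genuinely the full class $\mathcal{SL}^*$ and that no slack is lost as $q\to1^-$, i.e. that $1/36$ is simultaneously an upper bound and an attained value. Both are settled by the observation that the whole computation of Theorem~\ref{qstar}—from \eqref{h31} through the case analysis I--III—is continuous in $q$ down to $q=1$, so the Corollary is really a verbatim specialization of the theorem rather than a genuine limiting argument; alternatively one may simply invoke \cite{banga} directly.
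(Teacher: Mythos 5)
Your proposal matches the paper's own derivation: the paper simply lets $q\to1^-$ in Theorem~\ref{qstar}, obtaining $\lim_{q\to1^-}\tfrac{(1+q)^2}{16q^2(1+q+q^2)^2}=\tfrac{1}{36}$, and remarks that the extremal functions coincide in the limit with those of \cite{banga}. You are in fact slightly more careful than the paper in noting that one should check the argument specializes to (rather than merely limits toward) the class $\mathcal{SL}^*$, but the route is the same.
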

Moreover,  extremal functions also coincide in the case of $q \rightarrow 1^-$.
\begin{theorem}
	Let $f \in \mathcal{S}^*$ of the form $f(z)= z +\sum_{n=2}^{\infty} a_n z^n$. Then the sharp bound for third order Hankel determinant for such functions is given by 
	\begin{align}\label{bound}
		|H_3(1)| \leq 4/9.
\end{align}\end{theorem}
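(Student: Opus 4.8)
The strategy parallels the proof of Theorem~\ref{qstar}. Since $f\in\mathcal S^*$ exactly when $zf'(z)/f(z)=p(z)$ for some $p\in\mathcal P$ with $p(z)=1+p_1z+p_2z^2+\cdots$, I would first compare coefficients in $zf'(z)=p(z)f(z)$ to express $a_2,a_3,a_4,a_5$ as polynomials in $p_1,p_2,p_3,p_4$ (namely $a_2=p_1$, $2a_3=p_1^2+p_2$, $6a_4=p_1^3+3p_1p_2+2p_3$, $4a_5=a_4p_1+a_3p_2+a_2p_3+p_4$). Substituting these into the expansion~\eqref{h3} of $H_3(1)$ produces a polynomial in $p_1,p_2,p_3,p_4$; then I would invoke Lemma~\ref{p1p2p3} to replace $p_2,p_3,p_4$ by their representations in terms of $p:=p_1\in[0,2]$ and $\lambda,\mu,\delta\in\overline{\mathbb D}$, obtaining, after simplification, an identity of the shape
\begin{align*}
H_3(1)=\frac{\tau_1(p,\lambda)+\tau_2(p,\lambda)\mu+\tau_3(p,\lambda)\mu^2+\zeta(p,\lambda,\mu)\delta}{c},
\end{align*}
with an explicit normalizing constant $c$. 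Taking absolute values, using $|\delta|\le1$ and the triangle inequality, the problem reduces to maximizing a real polynomial $T(p,x,y)$, with $x:=|\lambda|$ and $y:=|\mu|$, over the closed cuboid $\mathfrak{C}:=[0,2]\times[0,1]\times[0,1]$.

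The bulk of the work is the optimization over $\mathfrak{C}$, carried out as in Theorem~\ref{qstar}: I would first rule out interior critical points by solving $\partial T/\partial y=0$ for $y=y_0(p,x)$, locating the region in $(p,x)$ where $y_0\in(0,1)$, and checking that $\partial T/\partial p\ne0$ there; then I would examine the six open faces $p=0$, $p=2$, $x=0$, $x=1$, $y=0$, $y=1$ (on several of these $T$ is independent of one variable, which shortens the analysis), and finally the twelve edges, on each of which $T$ becomes a one-variable polynomial whose maximum is found by elementary calculus. Combining all of these I expect to obtain $\max_{\mathfrak{C}}T=\tfrac49\,c$, with the maximum attained on the face $p=0$ (equivalently $p_1=0$, with $\lambda=0$ and $|\mu|=1$), where $H_3(1)$ collapses essentially to $-a_4^2$ and $|a_4|$ is maximal.

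For sharpness I would exhibit the function $\tilde f\colon\mathbb D\to\mathbb C$ determined by
\begin{align*}
\frac{z\tilde f'(z)}{\tilde f(z)}=\frac{1+z^3}{1-z^3},
\end{align*}
that is, $\tilde f(z)=z(1-z^3)^{-2/3}=z+\tfrac23z^4+\tfrac59z^7+\cdots$, which lies in $\mathcal S^*$ and has $a_2=a_3=a_5=0$, $a_4=\tfrac23$; substituting into~\eqref{h3} gives $H_3(1)=-a_4^2=-\tfrac49$, so the bound~\eqref{bound} is attained.

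The main obstacle is the case analysis over $\mathfrak{C}$: the polynomial $T$ and its partial derivatives are very large, and the delicate point is that the true maximum $4/9$ lies far below what crude coefficient estimates would predict, so the interior, face and edge computations must each be done tightly enough to confirm that no critical value exceeds $\tfrac49\,c$. A secondary technical point is verifying the algebraic reduction from the raw polynomial in $p_1,\dots,p_4$ to the compact form involving $\tau_1,\tau_2,\tau_3,\zeta$; once that is in place, the remaining steps are mechanical and mirror the preceding theorem.
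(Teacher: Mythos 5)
Your proposal follows essentially the same route as the paper: the same Carathéodory parametrization of $a_2,\dots,a_5$, the same use of Lemma~\ref{p1p2p3} to reduce $H_3(1)$ to a function of $(p,|\lambda|,|\mu|)$, the same interior/faces/edges maximization over the cuboid, and the same extremal function $z(1-z^3)^{-2/3}$ with $a_4=2/3$ giving $|H_3(1)|=4/9$. The location of the maximum you identify ($p=0$, $\lambda=0$, $|\mu|=1$) agrees with where the paper's function $S$ attains $4/9$.
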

\begin{proof} For $f \in \mathcal{S}^*$, we have
	\begin{align}\label{stareq}
		\dfrac{z f'(z)}{f(z)} = \dfrac{1+\omega(z)}{1-\omega(z)},
	\end{align}
	for some Schwarz function $\omega(z)$. Define a function, $p(z) =\tfrac{1+\omega(z)}{1-\omega(z)}$, then evidently $p \in \mathcal{P}$. The equation~\eqref{stareq} now reduces to
	\begin{align*}
		\dfrac{z f'(z)}{f(z)} = p(z) = 1+p_1 z+p_2 z^2+p_3 z^3+\cdots.
	\end{align*} 
	The Taylor series of which yield
	\begin{align*}
		a_2=p_1\text{, }a_3=\dfrac{p_2+p_1^2}{2}\text{, }a_4=\dfrac{p_1^3+3 p_1 p_2+ 2p_3}{6}\end{align*} and \begin{align*}				a_5=\dfrac{p_1^4+6 p_1^2p_2+3 p_2^2+8 p_1p_3+ 6 p_4}{24}.\end{align*}
	Here, we assume $p_1=:p$ lies in the interval $[0,2]$ due to the invariant property of class $\mathcal{P}$ under rotation. Equation~\eqref{h3}, together with the above expressions of $a_i's$, yield
	\begin{align*}
		H_3(1)=\dfrac{- p^6 +3 p^4 p_2 +8 p^3 p_3 + 24 p p_2 p_3 -9p^2 p_2^2 - 18 p^2 p_4 -9 p_2^3 - 16 p_3^2 +18 p_2 p_4}{144}.
	\end{align*}
	Applying Lemma \ref{p1p2p3} in the above equation for the values of $p_2$, $p_3$ and $p_4$ and further reducing it to the simpler form, we arrive at
	\begin{align*}
		H_3(1)=\dfrac{1}{1152}\bigg(\tau_1(p,\lambda)+\tau_2(p,\lambda)\mu+\tau_3(p,\lambda)\mu^2+\varsigma(p,\lambda,\mu)\delta\bigg).
	\end{align*}
	Where $\delta$, $\mu$, $\lambda$ $\in \overline{\mathbb{D}},$
	\begin{align*}
		&\tau_1(p,\lambda):=-2p^2\lambda^2(4-p^2)^2-10p^2\lambda^3(4-p^2)^2+p^2\lambda^4(4-p^2)^2\\& \qquad \qquad \quad+3p^4\lambda(4-p^2)+3p^4\lambda^2(4-p^2) -36p^2\lambda^2(4-p^2)-9p^4\lambda^3(4-p^2),\\&\tau_2(p,\lambda):=(4-p^2)(1-|\lambda|^2)\left(12 p^3+36 p^3\lambda+p \lambda (4-p^2)(20 -4 \lambda)\right),\\& \tau_3(p,\lambda):=(4-p^2)(1-|\lambda|^2)\left(36p^2\overline{\lambda}-4(4-p^2)(|\lambda|^2+8)\right),\\
		&\varsigma(p,\lambda,\mu):= (4-p^2)(1-|\lambda|^2)(1-|\mu|^2)\left(-36p^2+36\lambda(4-p^2)\right).
	\end{align*}
	Assuming $x:=|\lambda|$, $y:=|\mu|$ and using the fact $|\delta|\leq 1$, we have
	\begin{align*}
		|H_3(1)| \leq \dfrac{|\tau_1(p,\lambda)|+|\tau_2(p,\lambda)|y+|\tau_3(p,\lambda)|y^2+|\varsigma(p,\lambda,\mu)|}{1152}\leq S(p,x,y),
	\end{align*}
	where
	\begin{align}\label{G}
		S(p,x,y):=\dfrac{1}{1152}\bigg(s_1(p,x)+s_2(p,x)y+s_3(p,x)y^2+s_4(p,x)(1-y^2)\bigg)\end{align} with
	\begin{align*}
		&s_1(p,x):=2p^2x^2(4-p^2)^2+10 p^2 x^3(4-p^2)^2+p^2x^4(4-p^2)^2 +3p^4x(4-p^2)\\& \qquad \qquad \quad+3p^4x^2(4-p^2)+36p^2x^2(4-p^2)+9p^4x^3(4-p^2),\\&s_2(p,x):=(4-p^2)(1-x^2)(12p^3+px(4-p^2)(20+4x)+36p^3x),\\&s_3(p,x):=(4-p^2)(1-x^2)(32(4-p^2)+4x^2(4-p^2)+36p^2x),\\ &s_4(p,x):=(4-p^2)(1-x^2)(36p^2+36x(4-p^2)).\end{align*}
	Our aim is to maximize $S(p,x,y)$ in the closed cuboid $\mathfrak{C}: [0,2] \times [0,1] \times [0,1]$. We accomplish this by obtaining the maximum values in the interior of  $\mathfrak{C}$, in the interior of the six faces and on the twelve edges. 
	
	I.  First we consider the interior points of $\mathfrak{C}.$ Let $(p,x,y) \in (0,2)\times(0,1)\times(0,1)$. In order to achieve the maximum value in the interior of $\mathfrak{C}$, we partially differentiate equation (\ref{G}) with respect to $y$ and further reduce it to a simpler expression as		\begin{align*}
		\dfrac{\partial S}{\partial y}=\dfrac{1}{1152}(4-p^2)(1-x^2)(8y(x-1)((4-p^2)(x-8)+9 p^2)+4p(x(4-p^2)(5+x)+p^2(3+9x))).
	\end{align*}
	Now $\tfrac{\partial S}{\partial y}=0$ yields
	\begin{align*}
		y=:y_0=\dfrac{2p(x(4-p^2)(5+x)+p^2(3+9x))}{(1-x)((4-p^2)(x-8)+9p^2)}.
	\end{align*}
	In order to find the critical points, we first ensure $y_0$ should lie in the interval $(0,1)$, which is possible only when
	\begin{align}\label{cond}
		p^3(6+18x)+2px(4-p^2)(5+x)+(1-x)(8-x)(4-p^2) < 9 p^2(1-x)
	\end{align}
	and
	\begin{align}\label{cond2}
		9p^2 > (4-p^2)(8-x). 
	\end{align}
	We determine the common solutions for the above inequalities. A computation shows that inequality~\eqref{cond2} holds for all $x \in (0,1)$ whenever $ p > 1.37199$, but inequality~\eqref{cond} does not hold in $(0,2) \times (0,1)$. Therefore the function $S$ has no critical point in the given domain of values.
	
	II. Here below we calculate the maximum value on the six faces of the cuboid $\mathfrak{C}.$\\
	On the face $p=0,$ $S(p,x,y)$ becomes
	\begin{align}\label{f4}
		h_1(x,y):= S(0,x,y)=\dfrac{(1-x^2)(y^2(x-1)(x-8)+9x)}{18},
	\end{align}
	where $ x, y \in (0,1)$. We calculate
	\begin{align*}
		\dfrac{\partial h_1}{\partial y} = \dfrac{(1-x^2)y}{9}\left((x-1)(x-8)\right) \neq 0, \quad x,y \in (0,1).
	\end{align*}
	Clearly, we can infer from above that $h_1$ has no critical point in $(0,1) \times (0,1)$.
	
	On the face $p=2$, $S(p,x,y)$ becomes
	\begin{align}\label{2xy}
		S(2,x,y) =0, \quad x,y \in (0,1).
	\end{align}
	
	On the face $x=0$, $S(p,x,y)$ becomes
	\begin{align}\label{p0y}
		S(p,0,y) =: h_2(p,y) = \dfrac{(4-p^2)}{288}\left(3 p^3 y + y^2(8(4-p^2)-9 p^2)+ 9 p^2\right),	
	\end{align} $y \in (0,1) \text{ and } p \in (0,2).$
	Now we differentiate $h_2(p,y)$ partially with respect to y and obtain
	\begin{align*}
		\dfrac{\partial h_2}{\partial y} = \dfrac{(4-p^2)}{288} \left(3 p^3 +2y(8(4-p^2)-9 p^2)\right), \quad p \in (0,2)\text{ and } y \in (0,1).\end{align*}
	On solving $\partial h_2/\partial y =0$, we get 
	\begin{align}\label{yx0}
		y= \dfrac{3 p^3}{2(17 p^2 -32)},
	\end{align}
	which belongs to $(0,1)$ only when $p > p_0 \approx 1.47073$. Upon substituting the value of $y$ from equation~\eqref{yx0} in $\partial h_2/\partial p =0$, we arrive at 
	\begin{align*}
		\dfrac{(p (16384 - 25600 p^2 + 12944 p^4 - 2048 p^6 - 51 p^8)}{
			64 (32 - 17 p^2)^2}=0,
	\end{align*}
	for $p=1.20671$ in $(0,2)$. Thus there does not exist any critical point of $h_2$ in $(0,2) \times (0,1)$. 
	
	On the face $x=1$, $S(p,x,y)$ becomes
	\begin{align}\label{p1y1}
		S(p,1,y) =: h_3(p) = \dfrac{p^2}{576}\left(176-40 p^2-p^4\right). 
	\end{align}
	To find the maximum value of $h_3$, we solve $\partial h_3/\partial p =0$, which implies $p =: p_0 \approx 1.42948$ in $(0,2)$. A further calculation reveals $h_3''(p_0)<0$, indicating $p_0$ is the point of maxima and
	$$ S(p,1,y) \leq S(p_0,1,y) \approx 0.319595\text{, } p \in (0,2)\text{ and } y \in (0,1).$$
	
	On the face $y=0$, $S(p,x,y)$ becomes
	$$S(p,x,0)=: h_4(p,x)= (4-p^2) (144 x (1-x^2) + p^4 x (3 + x - x^2 - x^3) +4 p^2 (9 - 9 x + 2 x^2 + 19 x^3 + x^4)).$$ A computation yields
	\begin{align*}\dfrac{\partial h_4}{\partial p} = & 2 p (3 p^4 x (-3 - x + x^2 + x^3) + 16 (9 - 18 x + 2 x^2 + 28 x^3 + x^4)\\& - 8 p^2 (9 - 12 x + x^2 + 20 x^3 + 2 x^4))\end{align*}
	and
	$$\dfrac{\partial h_4}{\partial x} = (4 - p^2) (144 (1-3 x^2) + p^4 (3 + 2 x - 3 x^2 - 4 x^3) + 4 p^2 (-9 + 4 x + 57 x^2 + 4 x^3)).$$ We observe that there is no common solution for the equations $\tfrac{\partial h_4}{\partial p}=0$ and $\tfrac{\partial h_4}{\partial x} =0$, which indicates there does not exist any critical point of $h_4(p,x)$ in $(0,2) \times (0,1)$. 
	
	On the face $y=1$, $S(p,x,y)$ becomes $S(p,x,1)$, given as
	\begin{align*}
		h_5(p,x) := & \dfrac{1}{1152}\bigg( (1-x^2)(512 + 64 x^2  + 16 p x (20 + 4 x) + 
		p^2 (176 x^2 + 160 x^3 + 16 x^4 - 256 + 144 x \\& - 
		32 x^2)+ 
		p^4 (12 x - 40 x^2 - 44 x^3 - 8 x^4 + 32 - 
		36 x   + 4 x^2)  + 
		p^3 (48  + 144 x  - 8 x (20 + 4 x)) \\&+ 
		p^5 (-12 - 36 x  + x (20 + 4 x)) + 
		p^6 (-3 x- x^2 + x^3 + x^4)) \bigg). 
	\end{align*}
	On solving $\tfrac{\partial h_5(p,x)}{\partial x} = 0$ and $\tfrac{\partial h_5(p,x)}{\partial p} =0,$ we observe that there is no common solution to these equations, hence there does not exist any critical point of $h_5$ in $(0,2) \times (0,1)$.

	III. Finally, we find the maximum values attained by $S(p,x,y)$ on the edges of the cuboid $\mathfrak{C}.$ The equations~\eqref{f4},\eqref{2xy},\eqref{p0y} and \eqref{p1y1} are appropriately used  to evaluate $S(p,x,y)$ below for particular values of $p$, $x$ and $y$.
	\begin{itemize}
		\item[(i)] $S(p,0,0) = p^2(4-p^2)/32=: l_1(p)$. Now, $l_1'(p)=0$ for $p=0$ and $p=: \gamma_0 = \sqrt{2}$. Simply by second derivative test, we obtain $p=0$ is the point of minima and maximum value $1/8$ is attained at $\gamma_0$. So, we have
		$$ S(p,0,0) \leq \dfrac{1}{8}, \quad p \in [0,2].$$ 
		\item[(ii)] $S(p,0,1) = (4 - p^2) (32 - 8 p^2 + 3 p^3)/288$, which is a decreasing function of $p$ in the given range of $p$. Thus maximum value is obtained at $p=0$ and 
		$$ S(p,0,1) \leq S(0,0,1) = \dfrac{4}{9}\text{, } p \in [0,2].$$
		\item[(iii)] Since $S(p,1,y)$ is independent of $y$, we obtain $S(p,1,0) = S(p,1,1) = p^2 (176 - 40 p^2 - p^4) /576 =h_3(p)$, given in~\eqref{p1y1}. Thus
		$$S(p,1,0) = S(p,1,1) \leq 0.319595 \text{, } p \in [0,2].$$
		\item[(iv)] $S(0,0,y) = 4 y^2/9$, clearly which attains maximum value $4/9$ at $y=1$. So 
		$$S(0,0,y) \leq \dfrac{4}{9}\text{, } y \in [0,1].$$
		\item[(v)] $S(0,1,y) = S(2,0,y) = S(2,1,y) = 0$, $y \in [0,1]$.
		\item[(vi)] $S(0,x,0) = x(1-x^2)/2 =: l_3(x)$. Now $l_3'(x)= (1-3 x^2)/2=0$ gives $x= \gamma_1 := 1/\sqrt{3}$ in the interval $[0,1]$. Further second derivative of $l_3(x)$ is negative at $\gamma_1$. Thus $\gamma_1$ is the point of maxima and
		$$S(0,x,0) \leq \dfrac{1}{3 \sqrt{3}} = 0.19245\text{, } x \in [0,1].$$
		\item[(vii)] $S(0,x,1) = (1-x^2)(x^2+8)/18$, which is a decreasing function of $x$ in $[0,1]$. So clearly maximum value is attained at $x=0$ and we have
		$$ S(0,x,1) \leq  \dfrac{4}{9}\text{, } x \in [0,1].$$
		\item[(viii)] $S(2,x,0) = S(2,x,1) = 0$, $x \in [0,1]$.
	\end{itemize}	
	\indent Taking into account these cases I-III, the inequality~\eqref{bound} is proved. Consider the function $\tilde{f}: \mathbb{D} \rightarrow \mathbb{C}$ as follows
	\begin{align*}
		\tilde{f}(z)=z \exp\left(\int_{0}^{z}\dfrac{\left(\tfrac{1+t^3}{1-t^3}\right)-1}{t}dt
		\right)=z+\dfrac{2z^4}{3}+\cdots,\end{align*} clearly belongs to  $\mathcal{S}^*$ and for which, we have $a_2=a_3=a_5=0$ and $a_4=2/3$. This shows the bound $|H_3(1)|$ is sharp as equation~\eqref{h3} yields $|H_3(1)|= 4/9$ for this function.  \end{proof}

	%
	\subsection*{Acknowledgment}
The first author is supported by a Research Fellowship from the Department of Science and Technology, New Delhi (Ref No. IF170272).

\end{document}